\DeclareMathOperator{\Cyl}{Cyl}
\theoremstyle{definition}
\newtheorem{theorem}{Theorem}
\newtheorem{lemma}[theorem]{Lemma}
\newtheorem{conjecture}[theorem]{Conjecture}
\newcommand{\e}{\mathrm{e}}
\DeclareMathOperator*{\Exp}{\mathbb{E}}
\newcommand{\dzeta}{\mathrm{d}\zeta}
\newcommand{\eps}{\varepsilon}
\newcommand{\Gtilde}{\tilde{G}}
\newcommand{\SVC}{S}
\newcommand{\lfm}{f}
\newcommand{\abs}[1]{\left| #1 \right|}
\newcommand{\zetamax}{\zeta_{\max}}
\newcommand{\Bin}{\mathrm{Bin}}
\renewcommand{\emptyset}{\varnothing}
\begin{document}

\title{Tight bounds on the threshold for permuted $k$-colorability}

\author{
Varsha Dani \\ Computer Science Department \\ University of New Mexico 
\and
Cristopher Moore \\ Computer Science Department \\ University of New Mexico \\ and Santa Fe Institute 
\and
Anna Olson \\ Computer Science Department \\ University of Chicago
}

\maketitle 

\abstract{If each edge $(u,v)$ of a graph $G=(V,E)$ is decorated with a 
permutation $\pi_{u,v}$ of $k$ objects, we say that it has a 
\emph{permuted $k$-coloring} if there is a coloring 
$\sigma:V \to \{1,\ldots,k\}$ such that $\sigma(v) \ne \pi_{u,v}(\sigma(u))$ 
for all $(u,v) \in E$.  Based on arguments from statistical physics, 
we conjecture that the threshold $d_k$ for permuted $k$-colorability 
in random graphs $G(n,m=dn/2)$, where the permutations on the edges 
are uniformly random, is equal to the threshold for standard graph 
$k$-colorability.  The additional symmetry provided by random permutations 
makes it easier to prove bounds on $d_k$.  By applying the second moment 
method with these additional symmetries, and applying the first moment 
method to a random variable that depends on the number of available colors 
at each vertex, we bound the threshold within an additive constant.  
Specifically, we show that for any constant $\eps > 0$, for sufficiently 
large $k$ we have
\[
2 k \ln k - \ln k - 2 - \eps \le d_k \le 2 k \ln k - \ln k - 1 + \eps \, . 
\]
In contrast, the best known bounds on $d_k$ for standard $k$-colorability 
leave an additive gap of about $\ln k$ between the upper and lower bounds.}

\section{Introduction}

We consider random graphs $G(n,m)$ with $n$ vertices and $m$ edges
chosen uniformly without replacement.  We give each edge $(u,v)$ an
arbitrary orientation, and then associate it with a uniformly random
permutation $\pi_{u,v} \in S_k$, where $S_k$ denotes the group of
permutations of $k$ objects.  A \emph{permuted $k$-coloring} of this
decorated graph is a function $\sigma: V \to \{1,\ldots,k\}$ such
that $\sigma(v) \ne \pi_{u,v}(\sigma(u))$ for all edges $(u,v)$.  For
convenience we will sometimes reverse the orientation of an edge, and
write $\pi_{v,u} = \pi_{u,v}^{-1}$ when $u$ and $v$ are distinct.

We conjecture that there is a sharp threshold for the existence of 
such a coloring in terms of the average degree $d=2m/n$:
\begin{conjecture}
\label{conj:threshold}
For each $k \ge 3$ there is a constant $d_k$ such that
\[
\lim_{n \to \infty} \Pr\left[\mbox{$G(n,m=dn/2)$ has a permuted $k$-coloring}\right] 
= \begin{cases} 1 & d < d_k \\ 0 & d > d_k \, , \end{cases}
\]
\end{conjecture}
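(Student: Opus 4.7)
The property of having a permuted $k$-coloring is monotone decreasing in the edge set: adding any edge, regardless of its random permutation decoration, can only eliminate colorings. So its negation is a monotone increasing property, and my plan is to invoke Friedgut's sharp threshold theorem, reducing the conjectured sharp threshold at a constant $d_k$ to two sub-problems: (i) ruling out local (subgraph-based) approximations of non-colorability, and (ii) showing that the resulting sharp-threshold sequence $d_k(n)$ converges to a constant as $n \to \infty$. Since Friedgut's theorem is stated for $G(n,p)$, a standard translation between $G(n,p)$ and $G(n,m)$ will be needed, but this is routine.

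For step (i), one shows that non-colorability is not well-approximated by the presence of any fixed subgraph. The extra randomness over permutations should actually help rather than hurt here: for any subgraph $H$, averaging over the independent uniform permutations on its edges symmetrizes its contribution, so the small non-colorable configurations that cause local obstructions in the unpermuted setting have their obstruction probability averaged out in a structure-blind way. An Achlioptas--Friedgut style argument, adapted to count expected numbers of colored/decorated subgraph copies and compare them to the probability of non-colorability, should then go through, in fact with simplifications relative to the unpermuted case since permutation symmetry removes the need to track color classes.

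For step (ii) the plan is to combine the tight additive bounds of this paper's main theorem, which already pin $d_k(n)$ inside an interval of length $O(1)$, with an interpolation argument in the style of Franz--Leone and Panchenko for the log-partition-function, or alternatively a subadditivity argument coupling a random graph on $n+n'$ vertices to independent graphs on $n$ and $n'$ vertices via monotonicity in the edge set. This second step is the main obstacle: even for standard random graph $k$-colorability, proving convergence of the sharp-threshold sequence to a single constant rather than merely to a short window is open in general, and the only random CSPs for which this has been fully resolved (notably random $k$-SAT, via Ding--Sly--Sun) required substantial machinery built on the cavity method. Any proof for the permuted version would almost certainly have to exploit the permutation symmetry essentially—precisely to smooth away the cluster-structure pathologies that block straightforward convergence arguments for unpermuted $k$-colorability.
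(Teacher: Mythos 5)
This statement is stated in the paper as a \emph{conjecture}, and the paper offers no proof of it; the authors only prove bounds on $d_k$ \emph{conditional} on Conjecture~\ref{conj:threshold}. So there is no paper proof to compare against, and the relevant question is whether your proposal actually establishes the statement. It does not, and you concede as much: your step (ii) --- showing that the Friedgut-style sharp-threshold sequence $d_k(n)$ converges to a constant --- is precisely the content of the conjecture that is open, both here and for standard $k$-colorability. Reducing the conjecture to that step is a restatement of the difficulty, not a resolution of it. The paper's own upper and lower bounds pin the non-colorability transition inside an interval of additive width about $1+2+2\eps$, but an $O(1)$ window gives no traction on convergence of $d_k(n)$; interpolation/subadditivity arguments of Franz--Leone or Bayati--Gamarnik--Tetali type yield convergence of the free energy or of quantities like the independence ratio, and it is a known open issue that they do not directly yield convergence of satisfiability/colorability thresholds. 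So the proposal has a genuine, acknowledged gap at its central step.

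Two smaller points. First, even step (i) needs more care than you give it: the property ``has no permuted $k$-coloring'' is a monotone property of the decorated object (graph plus edge permutations), not of the graph alone, so Friedgut's graph-property theorem does not apply verbatim; you would need the Bourgain/Friedgut criterion for monotone properties of general product spaces, or an adaptation in the style of Achlioptas--Friedgut for $k$-colorability and Friedgut's treatment of random $k$-SAT. This is plausibly doable (and the permutation symmetry may indeed simplify the bookkeeping of local obstructions), but it is not routine. Second, a correct writeup would also have to handle the $G(n,p)$ versus $G(n,m)$ translation together with the conditioning on simplicity used in the paper's $\Gtilde(n,m)$ model, since multigraph artifacts (identity self-loops, $k$ parallel edges) make the multigraph version non-colorable with probability bounded away from $0$ at every density. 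In short: your outline is the standard and sensible attack, but it proves a weaker statement (existence of a sharp threshold \emph{sequence}) at best, and the statement as given remains a conjecture.
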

\noindent
where the probability space includes both the graph $G(n,m)$ and the set of permutations $\{\pi_{u,v}\}$.  Moreover, we conjecture that $d_k$ is also the threshold for standard graph $k$-colorability, which is the special case where $\pi_{u,v}$ is the identity permutation for all $u,v$:
\begin{conjecture}
\label{conj:same}
For the same $d_k$ as in Conjecture~\ref{conj:threshold}, 
\[
\lim_{n \to \infty} \Pr\left[\mbox{$G(n,m=dn/2)$ has a standard $k$-coloring}
\right] 
= \begin{cases} 1 & d < d_k \\ 0 & d > d_k \, , \end{cases}
\]
\end{conjecture}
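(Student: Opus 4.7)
The plan is to establish the two inequalities $d_k \ge d_k^\ast$ and $d_k \le d_k^\ast$ separately, where $d_k^\ast$ denotes the standard $k$-colorability threshold (assuming that threshold exists, as is itself conjectured). The starting observation is a first-moment agreement: averaging over the uniformly random $\pi_{u,v}$, the probability that a fixed $\sigma$ satisfies every constraint is exactly $(1-1/k)^m$, independent of $\sigma$, since a uniform $\pi$ sends $\sigma(u)$ to a uniform color. For the standard model, this same probability equals $(1-1/k)^m$ only in the perfectly balanced case $n_i \approx n/k$. Thus the first moments agree to leading order and the annealed prediction of the threshold is identical in both models, which is a necessary condition for the conjecture.

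For the upper bound $d_k \le d_k^\ast$ I would attempt a Guerra-style interpolation. Introduce a family of models parametrized by $\alpha \in [0,1]$ in which each edge independently carries the identity permutation with probability $1-\alpha$ and an independent uniform random permutation with probability $\alpha$, so that $\alpha=0$ is standard $k$-coloring and $\alpha=1$ is the permuted model. One would then differentiate $\frac{1}{n}\ln Z_\alpha$ (or directly the non-colorability probability) with respect to $\alpha$ and try to show monotonicity, so that non-colorability at $d > d_k^\ast$ in the standard case forces non-colorability in the permuted case. A closely related route is to produce a direct coupling in which any proper permuted coloring can be converted, after a local move, into a standard coloring of the same graph.

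For the lower bound $d_k \ge d_k^\ast$, I would try a gauge argument: any permutation decoration on a spanning subforest of $G$ can be trivialized by a vertex-wise relabeling $\tau_v \in S_k$ via $\pi'_{u,v} = \tau_v \pi_{u,v} \tau_u^{-1}$, since a tree has no cyclic obstruction. This reduces the problem to coloring a graph whose remaining non-tree edges carry ``twisted'' constraints, and the goal would then be to argue that the distribution of these residual constraints is contiguous with that of independent uniform constraints on an equivalent $G(n,m')$, so that standard colorability in the condensation-free regime transfers over. One would also need to verify that the many standard $k$-colorings guaranteed to exist below $d_k^\ast$ are sufficiently spread out that at least one survives the twisted constraints w.h.p.

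The main obstacle is bridging the annealed and quenched pictures. All first-moment statistics agree in the two models, and statistical-physics heuristics place them in the same universality class, but rigorous tools for sparse random CSPs generically lose a factor of $\ln k$ precisely in the condensation regime, which is where the standard and permuted models might conceivably differ. Without a sharp-threshold-preserving coupling or a rigorous implementation of the cavity method, I expect current techniques can only shrink the additive gap around $d_k$, not equate it exactly with $d_k^\ast$. Indeed, even the best known rigorous bounds on $d_k^\ast$ itself still have an additive $\ln k$ gap, so proving Conjecture~\ref{conj:same} would in particular entail closing that gap on the standard side, which is why I expect this conjecture to remain open for some time.
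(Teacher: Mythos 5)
The statement you are addressing is a \emph{conjecture}, and the paper offers no proof of it --- only heuristic evidence (the tree-unwinding argument, the coboundary construction, and the agreement of the cavity equations). Your proposal is likewise not a proof but a research program, and to your credit you say so explicitly in your final paragraph. Given that, the right assessment is to identify which of your ingredients match the paper's heuristics and where the genuine gaps lie. Your first-moment observation and your ``gauge argument'' are essentially the paper's own arguments: the paper notes that $\Exp[X]=k^n(1-1/k)^m$ in the permuted model, and its second heuristic is exactly your gauge transformation --- writing $\pi_{u,v}=\pi_u^{-1}\pi_v$ as a coboundary, which trivializes the constraints on any forest and yields permutations that are uniform and independent on any cycle-free edge set. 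The paper stops there, observing only that on a graph of girth $g$ the induced $\pi_{u,v}$ are $(g-1)$-wise independent, and does not attempt the contiguity claim you would need.

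The concrete gaps in your plan are these. First, the Guerra-style interpolation: for sparse CSPs, interpolation arguments (Franz--Leone, Bayati--Gamarnik--Tetali) give monotonicity or convergence of $\frac{1}{n}\Exp\ln Z$ under specific structural conditions, and there is no known reason the derivative in your parameter $\alpha$ has a sign; the two models have identical annealed free energies, so any monotonicity would have to be an equality, which is essentially the conjecture itself. Second, the contiguity step in your gauge argument fails as stated: the coboundary distribution on $\{\pi_{u,v}\}$ is supported on a measure-zero subset of $(S_k)^m$ (it satisfies a consistency relation around every cycle), so it is certainly not contiguous with the uniform product measure; what one would need is that permuted colorability is asymptotically insensitive to the difference, and that is precisely where the locally-treelike heuristic resists being made rigorous. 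Third, as you note, even granting these steps one would need the standard threshold $d_k^\ast$ to exist and be pinned down, which is itself open. So your proposal correctly reproduces the paper's motivating heuristics but does not close any of the gaps that make the statement a conjecture rather than a theorem.
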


Why might these two thresholds be the same?  First note that if $G$ is a tree, we can ``unwind'' the permutations on the edges, changing them all to the identity, by permuting the colors at each vertex.  For any set of permutations $\{\pi_{u,v}\}$, this gives a one-to-one map from permuted colorings to standard colorings.  Since sparse random graphs are locally treelike, for almost all vertices $v$ we can do this transformation on a neighborhood of radius $\Theta(\log n)$ around $v$.  The effect on $v$'s neighborhood of the other vertices' colors is then the same as it would be in standard graph coloring, except that the colors on the boundary are randomly permuted.  

In particular, suppose we choose a uniformly random coloring of a tree, erase the colors in its interior, and choose a new uniformly random coloring with the same boundary conditions.  The \emph{reconstruction threshold} is the degree $d$ above which this new coloring retains a significant amount of information about the original coloring~\cite{bhatnagar,sly}, and it is closely related to the clustering transition~\cite{recon-cluster}.  Since we can unwind the permutations on a tree, permuted $k$-colorability and standard $k$-colorability trivially have the same reconstruction threshold.

For another argument, consider the following alternate way to choose the permutations on the edges.  First choose a uniformly random permutation $\pi_u$ at each vertex $u$.  Then, on each edge $(u,v)$, let $\pi_{u,v} = \pi_u^{-1} \pi_v$.  (Algebraically, $\pi_{u,v}:E \to S_k$ is the coboundary of $\pi_u:V \to S_k$.)  Since $c(u) = \pi_{u,v}(c(v))$ if and only if $\pi_u(c(v)) = \pi_v(c(v))$, a local change of variables again gives a one-to-one map from permuted colorings to standard ones.  Now note that choosing $\pi_{u,v}$ in this way yields a uniform joint distribution on any set of edges that does not include a cycle; for instance, on a graph of girth $g$ the $\pi_{u,v}$ are $(g-1)$-wise independent and uniform.  Since most loops in a sparse random graph are long, we might hope that this distribution on the $\{\pi_{u,v}\}$ is the same, for all practical purposes, as the uniform distribution.

Finally, perhaps the most convincing argument for Conjecture~\ref{conj:same} comes from statistical physics.  Using cavity field equations to analyze the asymptotic behavior of message-passing algorithms such as belief propagation and survey propagation, we can derive thresholds for satisfiability or colorability~\cite{mezard-zecchina,mertens-mezard-zecchina,mulet}, as well as other thresholds such as clustering, condensation, and freezing~\cite{lenka-col,krzakala:etal:pnas}.  However, assuming that there is an equal density of vertices of each color, the cavity field equations for $k$-coloring in a random graph are identical to those for permuted $k$-coloring~\cite{lenka-potts,lenka-boettcher}: they simply express the fact that each edge $(u,v)$ forbids $u$ from taking a single color that depends on the color of $v$.

Thus if the physics picture is correct---and parts of it have been shown rigorously (e.g.~\cite{ach-aco-rt,amin-catch,amin-lenka})---then colorings and permuted colorings have the same ``thermodynamics'' on sparse random graphs, and hence the same thresholds for colorability, as well as for clustering, condensation, and freezing.

Conjecture~\ref{conj:same} is attractive because it is easier, given current methods, to prove tight bounds on the threshold for permuted $k$-coloring than it is for standard $k$-coloring.  
First we recall a simple upper bound.  Let $X$ denote the number of permuted $k$-colorings.  Since the permutations are chosen independently, the probability that any given coloring $\sigma$ is proper is $(1-1/k)^m$.  (Indeed, this is true of any multigraph with $m$ edges.) Thus the expected number of colorings is
\begin{equation}
\label{eq:expx}
\Exp[X] = k^n (1-1/k)^m = \left[ k (1-1/k)^{d/2} \right]^n \, .
\end{equation}
This is exponentially small if $k (1-1/k)^{d/2} < 1$, in which case $X=0$ with high probability by Markov's inequality.  Thus 
\begin{equation}
\label{eq:upper}
d_k \le \frac{2 \ln k}{-\ln (1-1/k)} < 2 k \ln k - \ln k \, . 
\end{equation}

Using the second moment method, we will prove a lower bound on $d_k$ that is an additive constant below this upper bound.  We also improve the upper bound, using a random variable that depends on the number of available colors at each vertex.  Our results show that, for any constant $\eps > 0$ and $k$ sufficiently large, 
\[
2 k \ln k - \ln k - 2 - \eps \le d_k \le 2 k \ln k - \ln k - 1 + \eps \, . 
\]
In contrast, the best known lower bound on the threshold for standard $k$-colorability is roughly $\ln k$ below the first moment upper bound.

To simplify our arguments, we work in a modified random graph model $\Gtilde(n,m)$ where the $m$ edges are chosen uniformly with replacement, and the endpoints of each edge are chosen uniformly with replacement from the $n$ vertices.  As a consequence, both self-loops and multiple edges occur with nonzero probability.  Note that, unlike standard $k$-colorability, a self-loop at a vertex $v$ does not necessarily render the graph uncolorable: it simply means that $\sigma(v)$ cannot be a fixed point of the permutation on the loop, i.e., $\sigma(v) \ne \pi_{v,v}(\sigma(v))$.  

However, if $\pi_{v,v}$ is the identity then coloring is impossible, and this occurs with probability $1/k!$.  Similarly, if $u$ and $v$ have $k$ edges between them, then with constant probability the permutations on these edges make a coloring impossible.  As a consequence, the probability that $\Gtilde(n,m)$ with random permutations has a permuted $k$-coloring is bounded below $1$.  

Our bounds proceed by showing that $\Gtilde(n,m)$ is permuted-$k$-colorable with probability $\Omega(1)$ if $d$ is sufficiently small, and is not permuted-$k$-colorable with high probability if $d$ is sufficiently large.  In the sparse case $m=O(n)$, $\Gtilde(n,m)$ is simple with probability $\Omega(1)$, in which case it coincides with $G(n,m)$.  Thus, assuming that Conjecture~\ref{conj:threshold} is true, these values of $d$ are bounds on the threshold $d_k$ for $G(n,m)$.

The rest of the paper is organized as follows.  In Section~\ref{sec:second} we give our second moment lower bound.  In Section~\ref{sec:first} we give our upper bound, which uses a random variable that depends on the number of available colors.  In Section~\ref{sec:iso}, we prove an isoperimetric inequality relevant to this random variable.  We defer the parts of our proofs that are ``mere calculus'' to Section~\ref{sec:calculus}.

\section{The second moment lower bound}
\label{sec:second}

As in the simple first moment upper bound, let $X$ denote the number of permuted $k$-colorings of a random multigraph $\Gtilde(n,m)$ with uniformly random permutations on its edges.  The Cauchy-Schwarz inequality applied to $X \cdot \; \mathbf{1}_{\{X > 0\}}$  gives
\[
\Pr[X > 0] \ge \frac{\Exp[X]^2}{\Exp[X^2]} \, . 
\]
Our goal is to show that $\Exp[X^2] / \Exp[X]^2 = O(1)$, so that a permuted coloring exists with probability $\Omega(1)$, for a certain value of $d$.  Assuming Conjecture~\ref{conj:threshold}, \emph{i.e.,} that a threshold $d_k$ exists, any such $d$ is a lower bound on $d_k$.

Computing the second moment $\Exp[X^2]$ requires us to sum, over all pairs of colorings $\sigma, \tau$, the probability $P(\sigma,\tau)$ that both $\sigma$ and $\tau$ are proper.  Since the edges of $\Gtilde$ and their permutations are chosen independently, we have
\[
P(\sigma,\tau) = p(\sigma, \tau)^m \, ,
\]
where $p(\sigma, \tau)$ is the probability that a random edge $(u,v)$, with a random permutation $\pi$, is satisfied by both colorings.  That is,
\[
p(\sigma, \tau) = \Pr_{u,v,\pi}\left[ \sigma(u) \ne \pi(\sigma(v)) \text{ and } \tau(u) \ne \pi(\tau(v)) \right] \, . 
\]
For random constraint satisfaction problems where each variable takes one of two values, such as $k$-SAT or hypergraph $2$-coloring~\cite{ach-moore,ach-moore-hyp,ach-peres}, $p(\sigma, \tau)$ is a function $p(\zeta)$ just of the overlap between $\sigma$ and $\tau$, i.e., the fraction $\zeta$ of variables on which they agree.  The second moment can then be bounded by maximizing a function of $\zeta$, which is typically a simple calculus problem.  

For pairs of $k$-colorings, however, $p(\sigma, \tau)$ depends on a $k$-by-$k$ matrix of overlaps, where $\zeta_{i,j}$ is the fraction of vertices $v$ such that $\sigma(v)=i$ and $\tau(v)=j$.  Computing the second moment then requires us to bound a function of roughly $k^2$ variables, a difficult high-dimensional maximization problem.  Achlioptas and Naor~\cite{ach-naor} used convexity arguments to bound this function on the Birkhoff polytope, showing that 
\[
d_k \ge 2 (k-1) \ln (k-1) \approx 2 k \ln k - 2 \ln k \, . 
\]
This leaves an additive gap of about $\ln k$ between the upper and lower bounds.  Note, however, that this bound is tight enough to determine, almost surely, the chromatic number $\chi(G)$ as a function of the average degree to one of two possible integers, namely $k$ or $k+1$ where $k$ is the smallest integer such that $2k \ln k > d$.  Achlioptas and Moore extended these arguments to random regular graphs~\cite{ach-moore-reg}, determining $\chi(G)$ as a function of $d$ to $k$, $k+1$, or $k+2$.


For permuted colorings, the second moment calculation is much easier.  The random permutations create additional local symmetries, making $p(\sigma, \tau)$ a function only of the fraction $\zeta$ on which the two colorings agree.  Thus we just have to maximize a function of a single variable.  As a consequence, we can prove a lower bound on $d_k$ that matches the upper bound~\eqref{eq:upper} within an additive constant.

\begin{theorem}
For any $\eps > 0$, for sufficiently large $k$ we have
\begin{equation}
\label{eq:lower}
d_k > 2 k \ln k - \ln k - 2 - \eps \, . 
\end{equation}
\end{theorem}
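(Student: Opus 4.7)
The plan is to apply the Paley--Zygmund (second moment) inequality
\[
\Pr[X > 0] \ge \frac{\Exp[X]^2}{\Exp[X^2]}
\]
to a random variable $X$ counting permuted $k$-colorings of $\Gtilde(n,m)$. To upgrade the conclusion to $\Pr[X>0] = \Omega(1)$ rather than the $\Omega(1/\sqrt{n})$ that naive Paley--Zygmund would give, I would restrict $X$ to balanced colorings (those placing exactly $n/k$ vertices in each color class), which removes a polynomial prefactor from the Laplace approximation. The goal is to exhibit a regime of $d$ in which $\Exp[X^2] = O(\Exp[X]^2)$.

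The central step is the symmetry flagged in the introduction: after averaging over the uniform random permutation $\pi$ on a random edge, the probability $p(\sigma,\tau)$ that the edge is satisfied by both $\sigma$ and $\tau$ depends on the pair only through the scalar overlap $\zeta = \tfrac1n|\{v : \sigma(v) = \tau(v)\}|$. I would verify this by splitting on the four cases for whether $\sigma(u)=\tau(u)$ and whether $\sigma(v)=\tau(v)$, which occur with probabilities $\zeta^2,\zeta(1-\zeta),(1-\zeta)\zeta,(1-\zeta)^2$ since $u$ and $v$ are independent in the $\Gtilde$ model. In each case the conditional probability over $\pi$ is a simple constant depending only on $k$; the only subtle piece is the ``disagree--disagree'' case, which uses $\Pr_\pi[\pi(a)=a',\pi(b)=b']=1/(k(k-1))$ for distinct $a\ne b$ and $a'\ne b'$ via inclusion--exclusion. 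One obtains
\[
p(\zeta) = \zeta^2\bigl(1-\tfrac1k\bigr) + 2\zeta(1-\zeta)\bigl(1-\tfrac2k\bigr) + (1-\zeta)^2\bigl(1-\tfrac2k+\tfrac{1}{k(k-1)}\bigr).
\]
Because edges are independent, $P(\sigma,\tau) = p(\zeta)^m$. Writing $\Exp[X^2] = \sum_\zeta N(\zeta)\,p(\zeta)^m$ with $N(\zeta)$ the count of balanced pairs at overlap $\zeta n$, and applying Stirling, the problem reduces, after dividing by $\Exp[X]^2$, to maximizing a one-variable entropy-plus-log-$p$ function $\Phi$ on $[0,1]$.

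The ``random overlap'' value $\zeta = 1/k$ is always a critical point: one checks by direct algebra that $p(1/k) = (1-1/k)^2$, so $\Phi(1/k) = 0$, and the lower bound on $d_k$ becomes the largest $d$ for which $\zeta = 1/k$ is the global maximum. This is where I expect the main obstacle to lie: ruling out other maxima requires a careful case analysis, with a local second-derivative check near $\zeta = 1/k$, a separate expansion near $\zeta = 1$ where $\sigma \approx \tau$ and the entropy term $-(1-\zeta)\ln(1-\zeta)$ becomes small while $p(\zeta) \to 1 - 1/k$ dominates, and a monotonicity or convexity argument on the bulk of the interval. The precise additive constant $-\ln k - 2 - \eps$ emerges from calibrating these asymptotic estimates as $k \to \infty$; this is the routine but tedious bookkeeping I expect the paper to defer to Section~\ref{sec:calculus}. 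Once the one-variable inequality is in hand, Paley--Zygmund delivers $\Pr[X>0] = \Omega(1)$, and since $\Gtilde(n,m)$ agrees with $G(n,m)$ with probability $\Omega(1)$ in the sparse regime, the claimed lower bound on $d_k$ follows under Conjecture~\ref{conj:threshold}.
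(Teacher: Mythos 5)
Your proposal follows essentially the same route as the paper: the same second--moment setup, the same observation that averaging over the random edge permutation collapses $p(\sigma,\tau)$ to a function of the scalar overlap (with exactly the paper's formula for $p(\zeta)$, including the inclusion--exclusion in the disagree--disagree case), the same reduction to maximizing a one-variable entropy-plus-$\log p$ function with $\zeta=1/k$ as the critical point where $\Phi(1/k)=0$, and the same deferral of the calculus that extracts the constant $-\ln k - 2-\eps$ to a separate lemma. The one place you deviate is the restriction to balanced colorings, and this is both unnecessary and counterproductive here. The paper keeps the unrestricted count $X$ and still gets $\Pr[X>0]=\Omega(1)$, because the $1/\sqrt{n}$ produced by Laplace's method (the width of the Gaussian peak around $\zeta=1/k$) exactly cancels the $\sqrt{n}$ that appears when the sum over overlaps is converted to an integral; the $\Omega(1/\sqrt{n})$ loss you fear only arises if one crudely bounds the sum by $(n+1)$ times its largest term. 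Worse, balancing would partially reintroduce the high-dimensional problem you are trying to avoid: even though $p$ depends only on the trace of the overlap matrix, counting pairs of \emph{balanced} colorings with a fixed scalar overlap requires maximizing a multinomial entropy over $k\times k$ doubly stochastic overlap matrices with prescribed trace, whereas the unrestricted count at fixed overlap is simply $k^n\binom{n}{z}(k-1)^{n-z}$. Finally, be aware that the deferred calculus is where the constant actually comes from: the paper needs a third-order Taylor bound on $\ln(1+x)$ (a first-order bound loses about $\ln k$) and a negative-fourth-derivative argument showing the surrogate function has at most two local maxima, the second of which sits at $\zeta = 1-a/k$ with $a\to 1$; your sketch of ``second derivative near $1/k$, expansion near $\zeta=1$, monotonicity in the bulk'' is the right shape but the two-local-maxima device is the idea that makes the bulk argument go through.
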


\begin{proof}
To compute the second moment, we sum over all $k^n {n \choose k} (k-1)^{n-z}$ pairs of colorings that agree at $z$ of the $n$ vertices.  We say that such a pair has overlap $\zeta = z/n$.  Then
\begin{equation}
\label{eq:second-sum}
\Exp[X^2] = k^n \sum_{z=0}^n {n \choose z} (k-1)^{n-z} \,p(z/n)^m \, , 
\end{equation}
where $p(\zeta)$ is the probability that a random edge, with a random permutation, is satisfied by both colorings.  Inclusion-exclusion gives
\[
p(\zeta) = \zeta^2 \left( 1-\frac{1}{k} \right) + 2 \zeta (1-\zeta) \left( 1-\frac{2}{k} \right) + (1-\zeta)^2 \left( 1-\frac{2}{k}+\frac{1}{k(k-1)} \right) \, . 
\]
Note that 
\[
p(1/k) = \left( 1-\frac{1}{k} \right)^2 \, , 
\]
corresponding to the fact that two independently random colorings typically have overlap $\zeta = 1/k+o(1)$.

We proceed as in~\cite{ach-moore}.  Approximating the sum~\eqref{eq:second-sum} with an integral and using~\eqref{eq:expx} gives
\begin{equation}
\label{eq:second-int}
\frac{\Exp[X^2]}{\Exp[X]^2}
\sim \frac{1}{\sqrt{n}} \sum_{z=0}^n \e^{\phi(z/n) n}
\sim \sqrt{n} \int_0^1 \dzeta \,\e^{\phi(\zeta) n} \, , 
\end{equation}
where $\sim$ hides multiplicative constants, where 
\[
\phi(\zeta) = h(\zeta) + (1-\zeta) \ln (k-1) - \ln k + \frac{d}{2} \ln \frac{p(\zeta)}{(1-1/k)^2} \, , 
\]
and where $h(\zeta) = -\zeta \ln \zeta - (1-\zeta) \ln (1-\zeta)$ is the entropy function.  Applying Laplace's method to the integral~\eqref{eq:second-int} then gives
\[
\frac{\Exp[X^2]}{\Exp[X]^2} \sim \frac{\e^{\phi(\zetamax) n}}{\sqrt{|\phi''(\zetamax)|}} \, , 
\]
where $\zetamax = \textrm{argmax}_{\zeta \in [0,1]} \phi(\zeta)$ is the global maximum of $\phi(\zeta)$, assuming that it is unique and that $\phi''(\zetamax) < 0$.  

We have $\phi(1/k) = 0$, so if $\zetamax = 1/k$ and $\phi''(1/k) < 0$ then $\Exp[X^2] / \Exp[X]^2 = O(1)$ and $\Pr[X > 0] = \Omega(1)$. The proof is then completed by the following lemma:
\begin{lemma}
\label{lem:psi}
For any constant $\eps > 0$, if $d = 2 k \ln k - \ln k - 2 - \eps$ and $k$ is sufficiently large, $\phi''(1/k) < 0$ and $\phi(\zeta) < 0$ for all $\zeta \ne 1/k$.
\end{lemma}
\noindent
We defer the proof of this lemma to Section~\ref{sec:calculus}.
\end{proof}

\section{An improved first moment upper bound}
\label{sec:first}

In this section we apply the first moment method to a weighted random variable, and improve the upper bound~\eqref{eq:upper} on $d_k$ by a constant.  Specifically, we will prove the following theorem:
\begin{theorem}
\label{thm:upper}
For any $\eps > 0$, for sufficiently large $k$ we have
\begin{equation}
\label{eq:betterupper}
d_k < 2 k \ln k - \ln k - 1 + \eps \, . 
\end{equation}
\end{theorem}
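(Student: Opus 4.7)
The plan is to apply the first moment method to a weighted count of proper colorings in which each coloring is discounted by the product of the reciprocals of the numbers of colors locally available at its vertices. Concretely, for a proper coloring $\sigma$ and vertex $v$, let
\[
A_\sigma(v) = \bigl\{\, c \in \{1,\ldots,k\} \;:\; c \ne \pi_{u,v}(\sigma(u)) \text{ for every edge } (u,v) \,\bigr\},
\]
the set of colors that would still yield a proper coloring if we changed only $\sigma(v)$ to $c$. Since $\sigma(v) \in A_\sigma(v)$, we have $a_v^\sigma := |A_\sigma(v)| \in \{1,\ldots,k\}$. Define
\[
Y \;=\; \sum_{\sigma \text{ proper}} \prod_v \frac{1}{a_v^\sigma}.
\]

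The first step is a deterministic, graph-by-graph lower bound $Y \ge 1$ whenever $X \ge 1$, which is where the isoperimetric inequality of Section~\ref{sec:iso} enters. The intuition is that each connected ``cluster'' of proper colorings under single-vertex recolorings contributes $\ge 1$ to $Y$: a product-structured cluster of size $\prod_v a_v^\sigma$ contributes exactly $1$, and the isoperimetric inequality controls the damage done by clusters lacking product structure. A direct induction on tree edges confirms the clean identity $Y = (k/(k-1))^{m} \ge 1$ on trees, consistent with this picture. Granted the bound $Y \ge 1$, Markov's inequality gives $\Pr[X \ge 1] \le \Pr[Y \ge 1] \le \Exp[Y]$, reducing the theorem to showing $\Exp[Y] = o(1)$.

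To compute $\Exp[Y]$, fix any $\sigma$ and reuse the permutation symmetry of Section~\ref{sec:second}: conditional on $\sigma$ being proper on an edge $(u,v)$, the forbidden color $\pi_{u,v}(\sigma(u))$ at $v$ is uniform on $\{1,\ldots,k\} \setminus \{\sigma(v)\}$, independently of the forbidden color at $u$ and of all other edges. Since the degree of $v$ in $\Gtilde(n, m = dn/2)$ is asymptotically $\Poi(d)$, Poisson thinning makes each color $c \ne \sigma(v)$ independently unforbidden at $v$ with probability $q = \e^{-d/(k-1)}$, so conditional on $\sigma$ proper, $a_v^\sigma - 1 \sim \Bin(k-1, q)$ approximately independently across $v$. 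A short manipulation using the identity $\binom{k-1}{j}/(j+1) = \binom{k}{j+1}/k$ and the binomial theorem gives $\Exp[1/a_v^\sigma \mid \sigma \text{ proper}] = (1-(1-q)^k)/(kq)$, so that
\[
\Exp[Y] \;\sim\; k^n (1-1/k)^m \left( \frac{1 - (1-q)^k}{k q} \right)^{\!n}.
\]

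The last step, which I would defer to Section~\ref{sec:calculus}, is routine calculus. For $d = 2k\ln k - \ln k - 1 + \eps$ one has $q \approx k^{-2}$ and $(k-1)q \approx 1/k$, so $\ln[(1-(1-q)^k)/(kq)] \approx -(k-1)q/2 \approx -1/(2k)$; this is precisely the extra per-vertex term needed, on top of $\ln k + (d/2) \ln(1-1/k)$, to drive $\Exp[Y]$ below $1$ and win the extra additive constant $1$ over the bound~\eqref{eq:upper}. I expect the main obstacle to be the isoperimetric inequality: the symmetry arguments and calculus are in the same spirit as Section~\ref{sec:second}, but the deterministic inequality $Y \ge 1$ on arbitrary realizations -- in particular handling clusters of proper colorings lacking product structure -- is the genuinely combinatorial piece and the technical heart of the argument.
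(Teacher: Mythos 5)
Your proposal is correct and follows essentially the same route as the paper: the same weighted variable $Z=\sum_\sigma \prod_v 1/c(\sigma,v)$, the same reduction via Markov's inequality together with the deterministic bound $Z\ge 1$ from the isoperimetric inequality of Section~\ref{sec:iso}, the same computation of $\Exp[Z]$ via edge-permutation symmetry, Poissonization of the degree sequence, and the binomial identity $\binom{k-1}{c-1}/c=\binom{k}{c}/k$, and the same asymptotics $\ln\bigl[(1-(1-r)^k)/(kr)\bigr]\approx -1/(2k)$ yielding the extra additive constant. You correctly identify the deterministic inequality $Z\ge 1$ as the combinatorial heart of the argument; the paper proves it by repeatedly ``thickening'' the solution set along one coordinate axis at a time, showing $Z$ can only decrease until the set becomes all of $[k]^n$.
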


We define our random variable as follows.  Every coloring (proper or not) of $n$ vertices with $k$ colors is an element of $[k]^n$ where $[k]=\{1,2,\dots,k\}$.  Thus the set of colorings is an $n$-cube of side $k$, with a dimension for each vertex.  The set of \emph{proper} colorings is some subset of this cube, $\SVC \subset [k]^n$.  The classic first moment argument we reviewed above computes the expected number of proper (permuted) colorings, $X=|S|$.  Here we define a new random variable, where each proper coloring is given a weight that depends on the ``degree of freedom'' at each vertex, i.e., the number of colors that vertex could take if the colors of all other vertices stayed fixed.

For any proper coloring $\sigma$, for each vertex $v$, let $c(\sigma, v)$ denote the number of colors available for $v$ if its neighbors are colored according to $\sigma$.  That is, 
\[
c(\sigma, v) = k - \abs{ \{ \pi_{u,v}(\sigma(u)) \,|\, (u,v) \in E \} } \, . 
\]
Note that if there are multiple edges between $u$ and $v$, they can each forbid $v$ from taking a color.  If $v$ has a self-loop, we think of it as denying a color to itself, in each direction:
\begin{equation}
\label{eq:self-loop}
c(\sigma, v) = k - \abs{ \{ \pi_{u,v}(\sigma(u)) \,|\, (u,v) \in E, u \ne v \} \cup \{ \pi_{v,v}(\sigma(v)), \pi_{v,v}^{-1}(\sigma(v)) } \, . 
\end{equation}
If $\sigma$ is a proper coloring, we have $c(\sigma, v) \ge 1$ for all $v$, since $v$'s current color $\sigma(v)$ is available.  Let 
\[
w(\sigma) 
= \begin{cases}\prod_{v} \left( 1 / c(\sigma, v) \right) 
& \mbox{if $\sigma$ is proper} \\
0 & \mbox{otherwise} \, , 
\end{cases}
\]
and let 
\[
Z= \sum_{\sigma \in [k]^n} w(\sigma) \, . 
\]

Why this random variable?  The expected number of colorings $\Exp[X]$ can be exponentially large, even above the threshold where $\Pr[X > 0]$ is exponentially small.  But close to the threshold, solutions come in clusters, where some vertices are free to flip back and forth between several available colors.  In a cartoon where each cluster is literally a subcube of $[k]^n$, a cluster containing a coloring $\sigma$ contributes $\prod_v c(\sigma,v)$ to $X$, but only $1$ to $Z$.  Thus, roughly speaking, $Z$ counts the number of clusters rather than the number of colorings.  Since the sizes of the clusters vary, $Z$ has smaller fluctuations than $X$ does, and hence gives a tighter upper bound on $d_k$.  We note that ``cluster counting'' random variables of other sorts have been studied elsewhere, such as satisfying assignments with a typical fraction of free variables~\cite{amin-catch} and certain kinds of partial assignments~\cite{maneva-sinclair}.

In this same cartoon where clusters are subcubes, $Z$ also counts the number of \emph{locally maximal} colorings, i.e., those colorings where no vertex $v$ can be flipped to a ``higher'' color $q > \sigma(v)$, since such colorings correspond to the highest corner of the cluster.  Bounds on $d_3$ were derived by counting locally maximal 3-colorings in~\cite{ach-molloy,kaporis,dubois-mandler}, culminating in $d_3 \le 4.937$.  The bounds we derive below are slightly weaker for $k=3$, yielding $d_3 \le 5.011$, since we treat the degrees of the vertices as independent rather than conditioning on the degree distribution.  Nevertheless, computing $\Exp[Z]$ gives a considerably simpler argument for general $k$.

Clearly $Z > 0$ if and only if $\SVC \ne \emptyset$.  However, in Section~\ref{sec:iso} we prove the following:
\begin{lemma} 
\label{lem:iso}
If $\SVC \ne \emptyset$ then $Z \ge 1$. 
\end{lemma}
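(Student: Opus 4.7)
The plan is to prove the lemma by applying an entropy (Han's-inequality) argument to the uniform distribution over $\SVC$. Let $\sigma$ be drawn uniformly from $\SVC$, which is well-defined because $\SVC \ne \emptyset$, and write $Y_v = \sigma(v)$ for each vertex $v$, so that $H(Y_1,\ldots,Y_n) = \ln |\SVC|$.

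The key observation is that, conditioned on $Y_{-v} := (Y_u)_{u \ne v}$, the coordinate $Y_v$ is uniformly distributed on the set of colors $c$ such that $\sigma[v \mapsto c] \in \SVC$, a set of size $c(\sigma,v)$. Therefore $H(Y_v \mid Y_{-v}) = \Exp_\sigma[\ln c(\sigma,v)]$. Combining the chain rule with the fact that extra conditioning only decreases entropy (which is Han's inequality) gives $\sum_v H(Y_v \mid Y_{-v}) \le H(Y) = \ln|\SVC|$, and hence $\Exp_\sigma\bigl[\sum_v \ln c(\sigma,v)\bigr] \le \ln|\SVC|$. Applying Jensen's inequality to the convex function $\mathrm{e}^x$,
\[
\Exp_\sigma\!\left[\prod_v \frac{1}{c(\sigma,v)}\right]
= \Exp_\sigma\!\left[\mathrm{e}^{-\sum_v \ln c(\sigma,v)}\right]
\ge \exp\!\bigl(-\Exp_\sigma[\textstyle\sum_v \ln c(\sigma,v)]\bigr)
\ge \frac{1}{|\SVC|} \, ,
\]
and multiplying both sides by $|\SVC|$ yields $Z = |\SVC| \cdot \Exp_\sigma[w(\sigma)] \ge 1$, as desired.

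I expect the main point requiring care to be the identity $H(Y_v \mid Y_{-v}) = \Exp_\sigma[\ln c(\sigma,v)]$, i.e., that $c(\sigma,v)$ correctly counts the colors $c$ with $\sigma[v\mapsto c]\in\SVC$. This is transparent for simple graphs, where it is immediate from the definition. For multigraphs one needs parallel edges to contribute their forbidden colors correctly in the union on the right of~\eqref{eq:self-loop}, and for self-loops one has to verify that the two forbidden colors $\pi_{v,v}(\sigma(v))$ and $\pi_{v,v}^{-1}(\sigma(v))$ encode the single-site recoloring options at $v$ in a way consistent with the above uniformity-on-slices calculation. Handling this bookkeeping — and more generally relating the combinatorial $c(\sigma,v)$ to the isoperimetric ``number of proper neighbors'' that the entropy argument sees — is expected to be the only real technical subtlety of the proof.
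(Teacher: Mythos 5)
Your entropy argument is correct and establishes the same combinatorial core as the paper, but by a genuinely different route. The paper isolates that core as a pure isoperimetric statement on $[k]^n$ (Lemma~\ref{lem:iso0}): for any $S\subseteq[k]^n$, with $c_S(\sigma,v)$ the number of $\sigma'\in S$ agreeing with $\sigma$ off $v$, one has $\sum_{\sigma\in S}\prod_v c_S(\sigma,v)^{-1}\ge 1$. It proves this by compression: replacing $S$ by its cylinder $\Cyl_v(S)$ along each axis in turn can only decrease the weighted sum, and after $n$ steps $S=[k]^n$, where the sum is exactly $1$. Your proof of the same inequality via the chain rule, ``conditioning reduces entropy,'' and Jensen is shorter and makes the $\ge 1$ bound feel inevitable; the paper's compression argument is more elementary (no entropy) and exhibits the extremal configuration. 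Both arguments then rest on identifying the coloring-theoretic $c(\sigma,v)$ with the combinatorial $c_S(\sigma,v)$ for $S$ the set of proper permuted colorings.

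That identification is exactly the step you defer, and you are right that it is the real issue --- but it is worse than bookkeeping. For loopless multigraphs it is an identity: a color $q$ lies outside the union of forbidden colors iff $\sigma[v\mapsto q]$ is proper, parallel edges included, so your proof is complete there. For a self-loop at $v$, however, $\sigma[v\mapsto q]$ is proper iff $q$ is not a fixed point of $\pi_{v,v}$, whereas~\eqref{eq:self-loop} excludes $q\in\{\pi_{v,v}(\sigma(v)),\pi_{v,v}^{-1}(\sigma(v))\}$; these conditions are incomparable, so the conditional law of $Y_v$ given $Y_{-v}$ is uniform on a set of size $c_S(\sigma,v)$, not $c(\sigma,v)$, and neither $c\le c_S$ nor $c\ge c_S$ holds in general. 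Concretely, with $n=1$, $k=4$ and a single self-loop carrying $\pi=(1\,2)$, one has $S=\{1,2\}$ but $c(\sigma,v)=3$ for each $\sigma\in S$, giving $Z=2/3<1$; so no pointwise comparison can rescue the self-loop case. (The paper's own one-line reduction --- ``removing self-loops can only increase $c$ and thus decrease $Z$'' --- runs into the same difficulty, since removing loops also enlarges $S$.) To close your argument, prove the lemma for loopless multigraphs, where it is complete, and dispose of self-loops separately in the first-moment application, e.g.\ by conditioning $\Gtilde$ on being loopless.
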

\noindent
Applying Markov's inequality, we see that 
\[
\Pr\left[ \mbox{$G$ has a permuted $k$-coloring} \right] = 
\Pr\left[Z \ge 1\right] 
\le \Exp[Z] \, ,
\]
and any $d$ such that $\Exp[Z] < 1$ is an upper bound on the threshold $d_k$.  Thus we will prove Theorem~\ref{thm:upper} by computing $\Exp[Z]$.

Given the symmetry provided by the random edge permutations, the expected weight $\Exp[w(\sigma)]$ of any given coloring is independent of $\sigma$.  By linearity of expectation and the fact that any given $\sigma$ is proper with probability $(1-1/k)^m$, we then have
\[
\Exp[Z] 
= k^n \,\Exp[w(\sigma)] 
= k^n (1-1/k)^m \,\Exp[w(\sigma) \,|\, \mbox{$\sigma$ proper} ] \, .
\]
Thus we are interested in the conditional expectation
\[
\Exp[w(\sigma) \,|\, \mbox{$\sigma$ proper} ] 
= \Exp\left[ \left. \prod_{v} \frac{1}{c(\sigma, v)} \right| \mbox{$\sigma$ proper} \right]  \, .
\]

\begin{lemma}
\label{lem:availablecolors}
Let $\sigma$ be a permuted $k$-coloring. For any vertex $v$, the conditional distribution of the number of available colors $c(\sigma, v)$ is a function only of $v$'s degree. In particular it does not depend on $\sigma$.
\end{lemma}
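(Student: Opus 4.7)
The plan is in three steps: first use a color-relabeling symmetry to reduce to a canonical coloring; then show that, conditional on propriety, each incident edge contributes uniform forbidden colors, independently across distinct edges; then conclude that $c(\sigma,v)$ is distributed as $k$ minus the number of occupied bins when $\deg(v)$ i.i.d.\ balls are thrown into $k-1$ bins, which depends only on $\deg(v)$.

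For the reduction, fix any reference coloring $\sigma_0$ (say $\sigma_0 \equiv 1$) and, for each vertex $u$, choose $\rho_u \in S_k$ with $\rho_u(\sigma(u))=1$. The map $\{\pi_{u,v}\} \mapsto \{\rho_v \pi_{u,v} \rho_u^{-1}\}$ is a measure-preserving bijection on $S_k^E$ by left/right invariance of the uniform measure on $S_k$; it sends propriety-under-$\sigma$ configurations to propriety-under-$\sigma_0$ configurations, and it acts on the multiset of forbidden colors at $v$ by the bijection $\rho_v$, thereby preserving $c$. So it suffices to prove the lemma for $\sigma=\sigma_0$.

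For a non-loop edge $(u,v)$, uniformity of $\pi_{u,v}$ on $S_k$ makes $\pi_{u,v}(1)$ uniform on $[k]$, and conditioning on the edge constraint $\pi_{u,v}(1)\neq 1$ gives the uniform distribution on $\{2,\dots,k\}$. Edge permutations are independent, so distinct non-loop contributions remain i.i.d.\ after conditioning. For a self-loop at $v$, which contributes the pair $(\pi_{v,v}(1),\pi_{v,v}^{-1}(1))$ by~\eqref{eq:self-loop}, I would verify by direct enumeration that for any target $(a,b)\in\{2,\dots,k\}^2$ there are exactly $(k-2)!$ permutations $\pi\in S_k$ with $\pi(1)=a$ and $\pi(b)=1$---whether or not $a=b$, since in the coincident case this forces the transposition $(1\;a)$ and otherwise it prescribes $\pi$ on two distinct inputs with two distinct outputs---and that the total number of permutations with $\pi(1)\neq 1$ is $(k-1)(k-1)!$. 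Hence the ordered pair is uniform on $\{2,\dots,k\}^2$, so a self-loop is statistically equivalent to two independent non-loop contributions, consistent with counting the loop as contributing $2$ to $\deg(v)$.

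The main subtlety is the self-loop case: the two forbidden colors arise from the same permutation and so are a priori correlated, and the fact that they are nevertheless jointly uniform on $\{2,\dots,k\}^2$ must be checked by the direct count above rather than inferred from cross-edge independence. Once this is granted, assembling the three steps yields the occupancy-problem distribution, which is a function of $\deg(v)$ alone, completing the proof.
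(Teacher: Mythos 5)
Your proof is correct and follows essentially the same route as the paper's: the heart of both arguments is that, conditioned on the edge constraint, each forbidden color $\pi_{u,v}(\sigma(u))$ is uniform on $[k]\setminus\{\sigma(v)\}$ and independent across incident edges, reducing $c(\sigma,v)$ to a balls-in-bins occupancy count depending only on $\deg v$; your $(k-2)!$ enumeration for self-loops is exactly the paper's Lemma~\ref{lem:indep}. The preliminary gauge reduction to $\sigma_0\equiv 1$ is valid but not needed, since the conditional uniformity argument already works for arbitrary $\sigma$.
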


\begin{proof} For each neighbor $u$ of $v$, there is a uniformly random permutation $\pi = \pi_{u,v}$ such that $v$ is blocked from having the color $\pi(\sigma(u))$.  Since $\sigma$ is proper, we know that $\pi(\sigma(u)) \ne \sigma(v)$.  Subject to this condition, $\pi$ is uniformly random among the permutations such that $\pi(\sigma(u)) \ne \sigma(v)$, and thus $\pi(\sigma(u))$ is uniformly random among the colors other than $\sigma(v)$.  In particular, it does not depend on $\sigma(u)$. 

We can think of the forbidden colors $\pi(\sigma(u))$ as balls, and the colors other than $\sigma(v)$ as bins.  We toss $\deg v$ balls independently and uniformly into these $k-1$ bins, one for each edge $(u,v)$.  Then $c(v)$ is the number of empty bins, plus one for $\sigma(v)$.  
\end{proof}

\begin{lemma}
\label{lem:indep}
Let $\sigma$ be a permuted $k$-coloring, $(u, v)$ an edge, and $\pi =\pi_{u,v}$ the associated random permutation. Then $\pi(\sigma(u))$ and $\pi^{-1}(\sigma(v))$ are independent, and are uniform over $[k]-\sigma(v)$ and $[k]-\sigma(u)$ respectively.
\end{lemma}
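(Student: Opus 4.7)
The plan is to work out the joint distribution of the pair $(\pi(\sigma(u)), \pi^{-1}(\sigma(v)))$ by direct counting, using the fact that, conditioned on $\sigma$ being proper, $\pi$ is uniform on the set of $(k-1)(k-1)!$ permutations satisfying $\pi(\sigma(u)) \ne \sigma(v)$. Set $a = \sigma(u)$ and $b = \sigma(v)$ for readability.

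The two marginal claims are essentially already in hand. That $\pi(a)$ is uniform on $[k]\setminus\{b\}$ was observed in the proof of Lemma~\ref{lem:availablecolors}. The symmetric statement that $\pi^{-1}(b)$ is uniform on $[k]\setminus\{a\}$ follows by the same argument applied to the reversed edge, using the convention $\pi_{v,u} = \pi_{u,v}^{-1}$. So the substantive content is the independence claim.

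For independence, I would fix any $c \in [k]\setminus\{b\}$ and $d \in [k]\setminus\{a\}$ and count the permutations satisfying both $\pi(a) = c$ and $\pi(d) = b$ simultaneously. Since $a \ne d$ and $c \ne b$, these two constraints pin down $\pi$ on two distinct inputs to two distinct outputs with no conflict, and the remaining $k-2$ values of $\pi$ may be assigned arbitrarily, giving exactly $(k-2)!$ such permutations. Dividing by $(k-1)(k-1)!$ yields
\[
\Pr[\pi(a) = c,\ \pi^{-1}(b) = d \,|\, \sigma \text{ proper}] = \frac{(k-2)!}{(k-1)(k-1)!} = \frac{1}{(k-1)^2},
\]
which is precisely the product of the two marginals $1/(k-1)$ on their respective supports. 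Independence follows.

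There is no real obstacle here. The only subtlety worth flagging is the case analysis that guarantees the two pinning constraints $\pi(a)=c$ and $\pi(d)=b$ are simultaneously satisfiable: if $a = d$ one would need $c = b$, and if $c = b$ one would need $a = d$, but both are ruled out by the range of $(c,d)$ we are considering. Once that is noted, the entire lemma reduces to a one-line count.
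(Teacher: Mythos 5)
Your proposal is correct and is essentially the paper's own argument: condition on properness to make $\pi$ uniform over the $(k-1)(k-1)!$ permutations with $\pi(\sigma(u))\ne\sigma(v)$, then count exactly $(k-2)!$ permutations realizing each admissible pair $(c,d)$, so every pair has probability $1/(k-1)^2$ and the joint distribution is uniform on the product set. Your extra remark verifying that the two pinning constraints never conflict is a nice explicit touch, but the route is the same.
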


\begin{proof}
Since $\sigma$ is proper, the conditional distribution of $\pi$ is uniform among all permutations such that $\pi(\sigma(u)) \ne \sigma(v)$ and $\pi^{-1}(\sigma(v)) \ne \sigma(u)$.  For any pair of colors $q, q'$ with $q \ne \sigma(v)$ and $q' \ne \sigma(u)$, there are exactly $(k-2)!$ permutations $\pi$ such that $\pi(\sigma(u))=q$ and $\pi^{-1}(\sigma(v))=q'$.  Thus all such pairs $(q,q')$ are equally likely, and the pair $\big( \pi(\sigma(u)), \pi^{-1}(\sigma(v)) \big)$ is uniform in $([k]-\sigma(v)) \times ([k]-\sigma(u))$.
\end{proof}

Note that Lemmas~\ref{lem:availablecolors} and~\ref{lem:indep} apply even to self-loops.  That is, if $\pi = \pi_{v,v}$ is uniformly random, then $\pi(\sigma(v))$ and $\pi^{-1}(\sigma(v))$ are independent and uniform in $[k]-\sigma(v)$.  Thus a self-loop corresponds to two balls, each of which can forbid a color.  Since a self-loop increases $v$'s degree by $2$, it has the same effect as two edges incident to $v$ would have.  Indeed, this is why we defined $c(\sigma,v)$ as in~\eqref{eq:self-loop}.

Now let $\{\deg v \,|\, v \in V\}$ denote the degree sequence of $G$.  By Lemmas~\ref{lem:availablecolors} and~\ref{lem:indep}, the numbers of available colors at the vertices $c(\sigma, v)$ are conditionally independent if their degrees are fixed.  Thus 
\begin{align*}
\Exp[w(\sigma) \,|\, \mbox{$\sigma$ proper}, \{\deg v\} ] 
&= \Exp\left[ \left. \prod_{v} \frac{1}{c(\sigma, v)} \,\right| \mbox{$\sigma$ proper}, \{\deg v\} \right] \\
&= \prod_v \Exp\left[ \left. \frac{1}{c(\sigma,v)} \,\right| \mbox{$\sigma$ proper}, \deg v \right] \\
&= \prod_v \sum_{c=1}^k \frac{Q(\deg v, k, c)}{c} \, ,
\end{align*}
where $Q(\deg v,k,c)$ denotes the probability that $v$ has $c$ available colors if it has $\deg v$; that is, the probability that if we toss $\deg v$ balls into $k-1$ bins, then $c-1$ bins will be empty.  Thus
\[
\Exp[w(\sigma) \,|\, \mbox{$\sigma$ proper} ] 
= \Exp_{\{\deg v\}} \prod_v \sum_{c=1}^k \frac{Q(\deg v, k, c)}{c} \, ,
\]
where the expectation is taken over the distribution of degree sequences in $\Gtilde(n,m)$.


The degree of any particular vertex in $\Gtilde(n,m=dn/2)$ is asymptotically Poisson with mean $d$.  
The degrees of different vertices are almost independent, as the next lemma shows.
\begin{lemma}\label{lem:degdist}
The joint probability distribution of the degree sequence of $\Gtilde(n,m=dn/2)$ is the same as that of $n$ independent 
Poisson random variables of mean $d$, conditioned on their sum being $2m$.
\end{lemma}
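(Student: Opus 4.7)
The plan is first to identify the degree sequence of $\Gtilde(n,m)$ with a simple balls-and-bins experiment. In the model, each of the $m$ edges has two endpoints drawn independently and uniformly from the $n$ vertices (with a self-loop contributing $2$ to the degree of its endpoint, consistently with~\eqref{eq:self-loop}), so the $2m$ half-edges are assigned to vertices by $2m$ independent uniform choices. Therefore the vector $(\deg v_1, \ldots, \deg v_n)$ is exactly multinomial with $2m$ trials and probability $1/n$ at each vertex.

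Next I would invoke the standard \emph{Poissonization} identity: if $Y_1,\ldots,Y_n$ are i.i.d.\ $\Poi(\lambda)$, then conditioned on $\sum_i Y_i = s$, the vector $(Y_1,\ldots,Y_n)$ is multinomial with $s$ trials and equal probabilities $1/n$. A one-line computation verifies this, using the fact that the sum of $n$ independent $\Poi(\lambda)$ variables is $\Poi(n\lambda)$: for any nonnegative integers $y_1,\ldots,y_n$ with $\sum_i y_i = s$,
\[
\Pr\!\left[ Y_1 = y_1,\ldots,Y_n = y_n \;\Big|\; \sum_i Y_i = s \right]
= \frac{\prod_i \e^{-\lambda} \lambda^{y_i}/y_i!}{\e^{-n\lambda} (n\lambda)^s/s!}
= \frac{s!}{\prod_i y_i!} \cdot \frac{1}{n^s} \, ,
\]
which is the multinomial probability.

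Finally I would combine the two observations with the parameters of the lemma: choose $\lambda = d$ and $s = 2m = dn$. Then both distributions reduce to the same $\mathrm{Multinomial}(2m;1/n,\ldots,1/n)$ law, proving the claim.

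I do not anticipate any real obstacle here: the argument is purely elementary. The only item worth explicit care is the bookkeeping for self-loops and multi-edges, which in $\Gtilde$ is already built into the degree definition used throughout the paper (a self-loop at $v$ contributes $2$ to $\deg v$, in agreement with~\eqref{eq:self-loop}); under this convention the half-edges really are $2m$ independent uniform labels, so the multinomial description is exact rather than approximate.
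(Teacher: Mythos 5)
Your proof is correct and follows essentially the same route as the paper: the paper also realizes the degree sequence as $2m$ balls thrown independently and uniformly into $n$ bins and then appeals to the standard Poissonization fact (citing Mitzenmacher--Upfal, Theorem 5.6), which you instead verify with the explicit one-line multinomial computation. No issues.
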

 
\begin{proof}
We can generate $\Gtilde(n,m=dn/2)$ as follows.  There are $n$ bins, one for each vertex.  We throw $2m$ balls uniformly and independently into the bins, and pair up consecutive balls to define the edges of the graph. 
The degree of each vertex is the number of balls in the corresponding bin.  The joint distribution of these occupancies is the product of $n$ independent Poisson distributions with mean $d$, conditioned on the total number of balls being $2m$; see e.g.~\cite[Theorem 5.6]{mitz-upf}.  
\end{proof}

The sum of $n$ independent Poisson variables of mean $d$ equals its mean $nd=2m$ with probability $O(1/\sqrt{m})=O(1/\sqrt{n})$.  Conditioning on an event that holds with probability $P$ increases the expectation by at most $1/P$, so
\begin{align}
\Exp[w(\sigma) \,|\, \mbox{$\sigma$ proper} ] 
&= \Exp_{\{\deg v\}} \prod_v \sum_{c=1}^k \frac{Q(\deg v, k, c)}{j} 
\nonumber \\
&= O(\sqrt{n}) \left( \Exp_{\deg v} \sum_{c=1}^k \frac{Q(\deg v, k, c)}{c} \right)^{\!n} \, ,
\label{eq:sqrtn}
\end{align}
where in the second line $\deg v$ is Poisson with mean $d$.  Since our goal is to show that $\Exp[Z]$ is exponentially small, the $\sqrt{n}$ factor will be negligible.

Now consider a balls and bins process with $k-1$ bins.  If the total number of balls is Poisson with mean $d$, then the number of balls in each bin is Poisson with mean $d/(k-1)$, and these are independent.  The probability that any given bin is empty, i.e., that any given color $q \ne \sigma(v)$ is available, is 
\[
r = \e^{-d/(k-1)} \, .
\]
The number of empty bins is binomially distributed as $\Bin(k-1,r)$, so
\[
Q(\deg v,k,c) = {k-1 \choose c-1} \,r^{c-1} \,(1-r)^{k-c} \, , 
\]
and
\begin{align*}
\Exp_{\deg v} \sum_{c=1}^k \frac{Q(\deg v, k, c)}{c} 
&= \sum_{c=1}^k \frac{1}{c} {k-1 \choose c-1} \,r^{c-1} \,(1-r)^{k-c} \\
&= \frac{1}{kr} \sum_{j=1}^k {k \choose c} \,r^c \,(1-r)^{k-c} \\
&= \frac{1}{kr} \left( 1-(1-r)^k \right) \, .
\end{align*}

Putting everything together, we have
\begin{align*}
\Exp[Z] & = k^n \left(1-\frac{1}{k}\right)^m \Exp[w(\sigma) \,|\, \mbox{$\sigma$ proper} ] \\
&= O(\sqrt{n}) \,k^n \left( 1-\frac{1}{k} \right)^{dn/2} 
\left( \frac{1}{kr} \left(1-(1-r)^k \right)\right)^n \\
& = O(\sqrt{n}) \left(1-\frac{1}{k} \right)^{dn/2} \e^{dn/(k-1)}
\left(1-(1-\e^{-d/(k-1)})^k \right)^n \, . 
\end{align*}
Taking the logarithm and dividing by $n$, in which case we can ignore the polynomial term $\sqrt{n}$, yields the following function of $d$:
\begin{equation}
\label{eqn:logEZ}
\lfm(d) := \lim_{n \to \infty} \frac{\ln \Exp[Z]}{n} 
= \frac{d}{2} \ln\left(1-\frac{1}{k}\right) + \frac{d}{k-1} + 
\ln\left( 1-\left(1-\e^{-d/(k-1)}\right)^k\right) \, .
\end{equation}
If $\lfm(d) < 0$ then $\Exp[Z]$ is exponentially small, so any such $d$ is an upper bound on $d_k$.  We prove the following lemma in Section~\ref{sec:calculus}:
\begin{lemma}
\label{lem:lfm}
For any constant $\eps > 0$, if $d = 2 k \ln k - \ln k - 1 + \eps$ and $k$ is sufficiently large, then $\lfm(d) < 0$.
\end{lemma}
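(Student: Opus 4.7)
The plan is to substitute $d = 2k\ln k - \ln k - 1 + \eps$ directly into the three summands of $\lfm(d)$ in~\eqref{eqn:logEZ} and expand each asymptotically in $1/k$, tracking every contribution of order $1/k$ or larger while absorbing the rest into an $O(\ln^2 k/k^2)$ remainder. The goal is to verify that the $O(1)$ and $(\ln k)/k$ contributions cancel across the three summands, leaving $\lfm(d) = -\eps/(2k) + O(\ln^2 k/k^2)$; for any fixed $\eps > 0$ this is strictly negative once $k$ is large enough that $\eps/(2k)$ dominates the remainder.

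For the first summand I would use the Mercator expansion $\ln(1-1/k) = -\sum_{j\ge 1} 1/(jk^j)$. The $j=1$ term $-d/(2k)$ evaluates to $-\ln k + (\ln k)/(2k) + (1-\eps)/(2k)$, the $j=2$ term $-d/(4k^2)$ contributes $-(\ln k)/(2k) + O(\ln k/k^2)$ and cancels the spurious $(\ln k)/(2k)$ above, and the $j \ge 3$ terms are $O(\ln k/k^2)$. This gives $(d/2)\ln(1-1/k) = -\ln k + (1-\eps)/(2k) + O(\ln k/k^2)$. For the second summand, expanding $1/(k-1) = 1/k + 1/k^2 + O(1/k^3)$ yields $d/(k-1) = 2\ln k + (\ln k - 1 + \eps)/k + O(\ln k/k^2)$. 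Combining these two pieces produces $\ln k + (\ln k)/k - (1-\eps)/(2k) + O(\ln k/k^2)$.

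For the third summand I would first use the expansion of the second summand to write $e^{-d/(k-1)} = k^{-2}\bigl(1 - (\ln k - 1 + \eps)/k + O(\ln^2 k/k^2)\bigr)$, then compute $(1 - e^{-d/(k-1)})^k = \exp\!\bigl(k \ln(1 - e^{-d/(k-1)})\bigr)$ via $\ln(1-x) = -x + O(x^2)$ together with $e^{-1/k} = 1 - 1/k + 1/(2k^2) + O(1/k^3)$, obtaining
\[
(1 - e^{-d/(k-1)})^k = 1 - \frac{1}{k} + \frac{\ln k - 1/2 + \eps}{k^2} + O\!\left(\frac{\ln^2 k}{k^3}\right).
\]
A final logarithm then gives the third summand as $-\ln k - (\ln k)/k + (1/2 - \eps)/k + O(\ln^2 k/k^2)$.

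Summing the three contributions, the $\ln k$ and $(\ln k)/k$ pieces cancel pairwise and the $1/k$ coefficient collapses to $-(1-\eps)/2 + (1/2 - \eps) = -\eps/2$, so $\lfm(d) = -\eps/(2k) + O(\ln^2 k/k^2) < 0$ for all sufficiently large $k$. The hard part here is not conceptual but purely bookkeeping: each Taylor series must be pushed to high enough order for the leading $\ln k$ and $(\ln k)/k$ cancellations to become visible, and one must be careful that no stray $(\ln k)/k$ term is silently dropped into the remainder, because the margin for negativity is only $\Theta(\eps/k)$ and any such oversight would wipe it out.
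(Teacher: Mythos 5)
Your proposal is correct and follows essentially the same route as the paper: substitute $d$ directly and Taylor-expand each of the three summands to order $1/k$, obtaining $\lfm(d) = -\eps/(2k) + O(k^{-2}\ln^2 k)$. The only cosmetic differences are that you expand $\bigl(1-\e^{-d/(k-1)}\bigr)^k$ via $\exp\bigl(k\ln(1-\cdot)\bigr)$ where the paper uses the binomial series, and the paper lets the exact $-d/(k-1)$ emerging from the logarithm cancel the second summand before expanding, whereas you expand both and cancel term by term.
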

\noindent
This completes the proof of Theorem~\ref{thm:upper}.





\section{An isoperimetric inequality}
\label{sec:iso}

In this section we prove Lemma~\ref{lem:iso}.  It has nothing to do with colorings; it is simply a kind of isoperimetric inequality that applies to any subset of $[k]^n$.  If $k=2$, it is the classic isoperimetric inequality on the Boolean $n$-cube.  That is, given $S \subseteq \{0,1\}^n$, for each $\sigma \in S$ let $\partial(\sigma)$ be the set of neighbors $\sigma' \in S$ that differ from $\sigma$ on a single bit.  Then
\[
\sum_{\sigma \in S} 2^{-|\partial(\sigma)|} \ge 1
\]
if and only if $S \ne \emptyset$.

First, some notation.  Let $V=\{1,\ldots,n\}$, and think of each element of $[k]^n$ as a function $\sigma:V \to [k]$.  Let $S \subseteq [k]^n$.  For each $\sigma \in S$ and $1 \le v \le n$, let $\partial_S(\sigma,v)$ denote the set of elements of $S$ that are ``neighbors of $\sigma$ along the $v$ axis,'' i.e., that agree with $\sigma$ everywhere other than at $v$.  That is,
\[
\partial_S(\sigma,v) = \left\{ \sigma' \in S \mid \forall u \ne v : \sigma'(u) = \sigma(u) \right\} \, .
\]
Let $c_S(\sigma,v)$ denote the number of such neighbors,
\[
c_S(\sigma,v) = \abs{\partial_S(\sigma,v)} \, ,
\]
and define the weight function $w_S$ as follows:
\[
w_S(\sigma) 
= \begin{cases}\prod_{v} \left( 1 / c_S(\sigma, v) \right) 
& \mbox{if $\sigma \in S$} \\
0 & \mbox{if $\sigma \notin S$} \, .
\end{cases}
\]
Then define the weight of the entire set as
\[
Z(S) = \sum_{\sigma \in [k]^n} w_S(\sigma).  
\]

\begin{lemma}
\label{lem:iso0}
If $S \ne \emptyset$ then $Z(S) \ge 1$.
\end{lemma}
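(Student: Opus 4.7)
The plan is to reduce the claim to an entropy inequality via AM--GM. Since $w_S(\sigma) = \prod_v 1/c_S(\sigma,v) > 0$ for every $\sigma \in S$, applying AM--GM to the $\abs{S}$ positive reals $\{w_S(\sigma)\}_{\sigma \in S}$ gives
\[
\frac{Z(S)}{\abs{S}} \;=\; \frac{1}{\abs{S}}\sum_{\sigma \in S}\prod_{v=1}^n \frac{1}{c_S(\sigma,v)} \;\ge\; \left(\prod_{\sigma \in S}\prod_v c_S(\sigma,v)\right)^{\!-1/\abs{S}}.
\]
Thus $Z(S) \ge 1$ will follow from the multiplicative inequality $\prod_{\sigma \in S}\prod_v c_S(\sigma,v) \le \abs{S}^{\abs{S}}$, i.e., from the additive bound $\sum_{\sigma \in S}\sum_{v=1}^n \ln c_S(\sigma,v) \le \abs{S}\,\ln \abs{S}$.

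I would establish this bound by interpreting both sides entropically. Let $X = (X_1,\dots,X_n)$ be uniformly distributed on $S$, so $H(X) = \ln \abs{S}$. Conditioned on $X_{-v} = x_{-v}$, the variable $X_v$ is uniform over $\{i \in [k] : (x_{-v},i) \in S\}$, whose size equals $c_S(\sigma,v)$ for every $\sigma \in S$ with $X_{-v} = x_{-v}$. Averaging $\ln c_S(\sigma,v)$ against the marginal $\Pr[X_{-v}=x_{-v}] = c_S(\sigma,v)/\abs{S}$ yields
\[
H(X_v \mid X_{-v}) \;=\; \frac{1}{\abs{S}}\sum_{\sigma \in S} \ln c_S(\sigma,v).
\]

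Finally, the chain rule plus monotonicity of conditional entropy (Han's inequality) gives $\sum_v H(X_v \mid X_{-v}) \le \sum_v H(X_v \mid X_1,\dots,X_{v-1}) = H(X) = \ln \abs{S}$. Plugging in the formula above produces exactly $\sum_{\sigma \in S}\sum_v \ln c_S(\sigma,v) \le \abs{S}\ln\abs{S}$, completing the proof. The only delicate step is the computation of $H(X_v\mid X_{-v})$: one must weight each fiber $x_{-v}$ by its true mass $c_S(\sigma,v)/\abs{S}$ rather than by uniform measure on the projection of $S$ onto the other coordinates, and this weighting is exactly what makes the average of $\ln c_S(\sigma,v)$ over fibers telescope into the desired average over all of $S$.
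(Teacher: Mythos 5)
Your proof is correct, and it takes a genuinely different route from the paper's. The paper argues by compression: it shows that thickening $S$ along any axis, $S \mapsto \Cyl_v(S)$, can only decrease $Z$, and iterates over all $n$ axes to land on $[k]^n$, where $Z=1$ exactly. You instead combine AM--GM over the $\abs{S}$ weights with the identity $H(X_v \mid X_{-v}) = \frac{1}{\abs{S}}\sum_{\sigma \in S}\ln c_S(\sigma,v)$ for $X$ uniform on $S$, and then invoke the chain rule together with the fact that conditioning cannot increase entropy to get $\sum_v H(X_v\mid X_{-v}) \le H(X) = \ln\abs{S}$; this is exactly the bound $\sum_{\sigma,v}\ln c_S(\sigma,v) \le \abs{S}\ln\abs{S}$ you need. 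All three steps check out: the fibers over $x_{-v}$ partition $S$ into blocks of size $c_S(\sigma,v)$, each carrying mass $c_S(\sigma,v)/\abs{S}$, so the weighted average over fibers does telescope into the unweighted average over $S$ as you claim. In effect your argument shows the cleaner statement $Z(S) \ge \exp\bigl(H(X) - \sum_v H(X_v \mid X_{-v})\bigr) \ge 1$. What each approach buys: yours is shorter and imports a standard tool (Han's inequality), avoiding the bookkeeping of how $c_T$ compares to $c_S$ after thickening; the paper's is entirely elementary and self-contained, and as a byproduct establishes the monotonicity $Z(S) \ge Z(\Cyl_v S)$, which is a slightly stronger structural fact than the endpoint inequality alone.
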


%

\begin{proof}
If $S = [k]^n$, then $w_S(\sigma) = 1/k^n$ and $Z(S) = 1$.  Thus our goal will to enlarge $S$ until $S=[k]^n$, showing that $Z(S)$ can only decrease at each step.  
For a given $\sigma$ and $v$, let $\Cyl_v(\sigma)$ denote the set of $\tau \in [k]^n$ that we can obtain by letting $\sigma(v)$ vary arbitrarily:
\[
\Cyl_v(\sigma)= \left\{\tau \in [k]^n \mid \mbox{$\tau(u)=\sigma(u)$ for all $u \ne v$} \right\} \, .
\]
In particular, $\Cyl_v(\sigma) = \Cyl_v(\sigma')$ if and only if $\sigma' \in \partial_S(\sigma,v)$.  
Similarly, let $\Cyl_v(S)$ be the ``thickening'' of $S$ along the $v$ axis, 
\begin{align*}
\Cyl_v(S) 
&= \bigcup_{\sigma \in S} \Cyl_v(\sigma) \, . 
\end{align*}
We claim that this thickening can only decrease $Z$.  That is, for any $S \ne \emptyset$ and any $v$, 
\[
Z(S) \ge Z(\Cyl_v S) \, . 
\]
To see this, let $T=\Cyl_v(S)$.  Each $\sigma \in S$ contributes $c_v(\sigma,v)$ times to the union $\bigcup_{\sigma \in S} \Cyl_v(\sigma)$, so 
\begin{equation}
\label{eq:zt}
Z(T) = \sum_{\sigma \in S} \frac{1}{c_S(\sigma,v)} \,w_T(\Cyl_v(\sigma)) \, . 
\end{equation}
Since each $\sigma \in T$ has $c_T(\sigma,v)=k$, each $\tau \in \Cyl_v(\sigma)$ has $c_T(\tau,u) \ge c_S(\sigma,u)$ for all $u \ne v$, and $|\!\Cyl_v(\sigma)| = k$, we have
\[
w_T(\Cyl_v(\sigma)) \le k \,\frac{c_S(\sigma,v)}{k} \,w_S(\sigma) = c_S(\sigma,v) \,w_S(\sigma) \, .
\]
Combining this with~\eqref{eq:zt} gives
\[
Z(T) = \sum_{\sigma \in S} \frac{1}{c_S(\sigma,v)} \,w_T(\Cyl_v(\sigma)) \le \sum_{\sigma \in S} w_S(\sigma) = Z(S) \, . 
\]
To complete the proof, let $T_0 = S$, and for each $1 \le v \le n$ let $T_v = \Cyl_v(T_{v-1})$.  Then $T_n = [k]^n$, and
\[
Z(S) = Z(T_0) \ge Z(T_1) \ge \cdots \ge Z(T_n) = 1 \, . \qedhere
\]
\end{proof}

\begin{proof}[Proof of Lemma~\ref{lem:iso}.]
Let $S$ be the set of permuted $k$-colorings.  The number of available colors $c(\sigma,v)$ we defined in Section~\ref{sec:first} is almost identical to $c_S(\sigma,v)$ as defined in Lemma~\ref{lem:iso0}.  The only difference is that in Section~\ref{sec:first}, if $v$ has a self-loop then it forbids two of its own colors, namely $\pi_{v,v}(\sigma(v))$ and $\pi_{v,v}^{-1}(v)$.  Removing these self-loops can only increase $c(\sigma,v)$ and thus decrease $Z$, so if $S \ne \emptyset$ then $Z \ge 1$ by Lemma~\ref{lem:iso0}.
\end{proof}

%
%

\section{A little calculus}
\label{sec:calculus}

\begin{proof}[Proof of Lemma~\ref{lem:psi}]
Recall that
\[
\phi(\zeta) = h(\zeta) + (1-\zeta) \ln (k-1) - \ln k + \frac{d}{2} \ln \frac{p(\zeta)}{(1-1/k)^2} \, .
\]
We upper bound $\phi(\zeta)$ with a simpler function.  We have
\[
\frac{p(\zeta)}{(1-1/k)^2} - 1 = \frac{(k \zeta-1)^2}{(k-1)^3} \le \frac{1}{k-1} \, , 
\]
and if $-1 < x \le 1/(k-1)$ the third-order Taylor series gives
\[
\ln (1+x) \le x - \frac{x^2}{2} + \frac{x^3}{3}  = x - \frac{x^2}{2} \left(1-\frac{2x}{3}\right) \le x - \frac{x^2}{2} \left( 1-\frac{2}{3(k-1)} \right) 
:= \ell(x) \, . 
\]
Therefore, we have $\phi(\zeta) \le \psi(\zeta)$ where 
\[
\psi(\zeta) = h(\zeta) + (1-\zeta) \ln (k-1) - \ln k + \frac{d}{2} \,\ell\!\left( \frac{(k \zeta-1)^2}{(k-1)^3} \right) \, . 
\]
This upper bound is tight at $\zeta = 1/k$, where
\[
\psi(1/k) = \phi(1/k) = 0 \, . 
\]
We remark that using the first-order Taylor series $\ln (1+x) \le x$, or equivalently $\ell(x) = x$, yields a weaker lower bound on $d_k$, about $\ln k$ below the first moment upper bound.  

Except for the entropy function, the dependence of $\psi(\zeta)$ on $\zeta$ is polynomial, making its derivatives significantly simpler than those of $\phi$.  First we note that
\[
\psi''(1/k) = \frac{k^2}{(k-1)^3} \,\left(d-(k-1)^2 \right) \, .
\]
Thus if $d < (k-1)^2$, we have
\[
\phi''(1/k) \le \psi''(1/k) < 0 \, .
\]
Next, if $k \ge 2$ then the fourth derivative of $\psi(\zeta)$ is negative throughout the unit interval, 
\[
\psi''''(\zeta) = - 2 \left( \frac{1}{\zeta^3} + \frac{1}{(1-\zeta)^3} + d k^4 \frac{3k-5}{(k-1)^7} \right) < 0 \, . 
\]
As a consequence, $\psi(\zeta)$ has at most two local maxima in the unit interval, one of which is at $\zeta = 1/k$.  Our goal is to locate the other local maximum, which we denote $\zeta_2$, and to show that $\psi(\zeta_2) < 0$.

Assume that $d$ is $2+\eps$ below the first moment upper bound for some constant $\eps$, 
\[
d = 2 k \ln k - \ln k - 2 - \eps \, ,
\]
and set
\[
\zeta_2 = 1-\frac{a}{k} 
\]
for a constant $a$.  Then using Taylor series gives
\begin{align*}
\psi'(1-a/k) 
&=  \ln \frac{a}{k} - \ln \left(1-\frac{a}{k} \right) - \ln (k-1) + \frac{dk(k-1-a)}{(k-1)^3} \,\ell'\!\left( \frac{(k-1-a)^2}{(k-1)^3} \right) \\
&= \ln a - 2 \ln k + d \left( \frac{1}{k} + \frac{1-a}{k^2} \right) + O(1/k) \\
&= \ln a + (1-2a) \frac{\ln k}{k} + O(1/k) \, . 
\end{align*}
For any constant $a \ne 0$, for sufficiently large $k$ this is positive if $a > 1$ and negative if $a < 1$.  Therefore, $\zeta_2 = 1-a/k$ where $a$ tends to $1$ as $k \to \infty$, roughly as $a \approx k^{1/k}$.

Finally, again setting $d= 2 k \ln k - \ln k - 2 - \eps$, some more Taylor series give
\begin{align*}
\psi(1-a/k) 
&= h(a/k) + \frac{a}{k} \,\ln (k-1) - \ln k + \frac{d}{2} \,\ell\!\left( \frac{(k-1-a)^2}{(k-1)^3} \right) \\
&= \frac{1}{k} \left( a - a \ln a + 2 a \ln k \right) + O(1/k^2) - \ln k + \frac{d}{2} \left( \frac{1}{k} + \frac{1-4a}{2k^2} + O(1/k^3) \right) \\
&= \frac{1}{k} \left(a - a \ln a - 1 - \frac{\eps}{2} \right) + O\!\left( \frac{\ln k}{k^2} \right) \\
&\le -\frac{\eps}{2k} + O\!\left( \frac{\ln k}{k^2} \right) \, ,
\end{align*}
where we used $a - a \ln a \le 1$ for all $a > 0$.  Thus for any constants $a > 0$ and $\eps > 0$, when $k$ is sufficiently large $\psi(1-a/k)$ is negative, showing that $\psi(\zeta) < 0$ in the vicinity of $\zeta_2$.  This completes the proof.
\end{proof}

\begin{proof}[Proof of Lemma~\ref{lem:lfm}]
Recall that
\[
\lfm(d) 
= \frac{d}{2} \ln\left(1-\frac{1}{k}\right) + \frac{d}{k-1} + \ln \left( 1-\left(1-\e^{-d/(k-1)}\right)^k\right) \, .
\]
Setting 
\[
d = 2k \ln k - \ln k - 1 + \eps \, , 
\]
we have
\[
\e^{-d/(k-1)} = k^{-2} \,k^{-1/(k-1)} \,\e^{(1-\eps)/(k-1)} 
= k^{-2} + O(k^{-3} \log k) \, . 
\]
Taking the binomial series to second order gives
\begin{align*}
\left(1-\e^{-d/(k-1)}\right)^k &= 1-k \e^{-d/(k-1)} + {k \choose 2} \,\e^{-2d/(k-1)} + O(k^3 \e^{-3d/(k-1)})\\
&=1-k \e^{-d/(k-1)} + \frac{k(k-1)}{2} \,\e^{-2d/(k-1)} + O(k^{-3}) \, . 
\end{align*}
Plugging this into the last term of $\lfm(d)$ and using the Taylor series for $\ln (1-x)$ gives
\begin{align*}
\ln\left( 1-\left(1-\e^{-d/(k-1)}\right)^k\right) 
&= \ln\left( k \e^{-d/(k-1)} - \frac{k(k-1)}{2} \,\e^{-2d/(k-1)} + O(k^{-3})\right) \\
&= \ln k-\frac{d}{k-1}+ \ln \left(1 - \frac{(k-1)}{2} \,\e^{-d/(k-1)} + O(k^{-2}) \right) \\
&= \ln k-\frac{d}{k-1} - \frac{(k-1)}{2} \,\e^{-d/(k-1)} + O(k^{-2}) \\
&= \ln k-\frac{d}{k-1} - \frac{1}{2k} + O(k^{-2} \log k) \, ,
\end{align*}
and plugging this back in to $f(d)$ gives
\begin{equation}
\label{eq:f2}
\lfm(d) 
= \frac{d}{2} \ln\left(1-\frac{1}{k}\right) + \ln k - \frac{1}{2k} + O(k^{-2} \log k) \, . 
\end{equation}
The second order Taylor series for $\ln (1-x)$ gives
\begin{align*}
\frac{d}{2} \ln\left(1-\frac{1}{k}\right) 
&= \left(k\ln k - \frac{\ln k}{2} - \frac{1-\eps}{2}\right)\left(-\frac{1}{k} -\frac{1}{2k^2} +O(k^{-3})\right) \\
&= -\ln k + \frac{1-\eps}{2k} + O(k^{-2} \log k) \, . 
\end{align*}
Finally, putting this in~\eqref{eq:f2} gives
\[
f(d) = -\frac{\eps}{2k} + O(k^{-2} \log k) \, . 
\]
For any constant $\eps > 0$, this is negative for sufficiently large $k$, completing the proof.
\end{proof}

\section*{Acknowledgments.}  We benefited from conversations with Tom Hayes; with Lenka Zdeborov\'a and Florent Krz\c{a}kala on the Potts spin glass; and with Alex Russell and Dimitris Achlioptas on isoperimetric inequalities.  This work was partly supported by the McDonnell Foundation and the National Science Foundation.  Part of this work was done in 2008 while the third author was a student at Carnegie Mellon and a Research Experience for Undergraduates intern at the Santa Fe Institute.


\begin{thebibliography}{99}

\bibitem{ach-aco-rt} D. Achlioptas, A. Coja-Oghlan, and F. Ricci-Tersenghi, 
``On the solution-space geometry of random constraint satisfaction problems.''
\emph{Random Struct. Algorithms} 38(3): 251--268 (2011).

\bibitem{ach-molloy} D. Achlioptas and M. Molloy, ``Almost All Graphs with $2.522 n$ Edges are not $3$-Colorable.'' \emph{Electronic Journal of Combinatorics} 6 (1999) R29.

\bibitem{ach-moore} D. Achlioptas and C. Moore, ``Two moments suffice to cross a sharp threshold.''  \emph{SIAM Journal on Computing} 36 (2006) 740--762.

\bibitem{ach-moore-hyp} D. Achlioptas and C. Moore, ``On the two-colorability of random hypergraphs.''  \emph{Proc.\ 6th Intl.\ Workshop on Randomization and Approximation Techniques in Computer Science} (RANDOM '02) 78--90.

\bibitem{ach-moore-reg} D. Achlioptas and C. Moore, ``The chromatic number of random regular graphs.'' \emph{Proc.\ 8th Intl.\ Workshop on Randomization and Computation} (RANDOM '04), 219--228.

\bibitem{ach-naor} D. Achlioptas and A. Naor, ``The Two Possible Values of the Chromatic Number of a Random Graph.'' \emph{Ann. Math.} 162 (3), (2005), 1333--1349.

\bibitem{ach-peres} D. Achlioptas and Y. Peres, ``The Threshold for Random $k$-SAT is $2k \log 2 - O(k)$.''  \emph{J. AMS} 17 (2004) 947--973.

\bibitem{bhatnagar} Nayantara Bhatnagar, Juan Carlos Vera, Eric Vigoda, and Dror Weitz, 
``Reconstruction for Colorings on Trees.'' \emph{SIAM J. Discrete Math.} 25(2):809--826 (2011).

\bibitem{amin-catch} Amin Coja-Oghlan and Konstantinos Panagiotou, ``Catching the $k$-NAESAT threshold.''  Preprint, \url{arXiv:1111.1274v1}.

\bibitem{amin-lenka} Amin Coja-Oghlan and Lenka Zdeborov\'a, ``The condensation transition in random hypergraph 2-coloring.''  Preprint, \url{arXiv:1107.2341v2}.

\bibitem{dubois-mandler} O. Dubois and J. Mandler, ``On the non-3-colorability of random graphs.''  Preprint, \url{arXiv:math/0209087v1}.

\bibitem{kaporis} Alexis C. Kaporis, Lefteris M. Kirousis, and Yannis C. Stamatiou, ``A note on the non-colorability threshold of a random graph.'' \emph{Electronic Journal of Combinatorics} 7(1) (2000) R29.
 
\bibitem{krzakala:etal:pnas} 
Florent Krz\c{a}kala, Andrea Montanari, Federico Ricci-Tersenghi, Guilhem Semerjian, and Lenka Zdeborov\'a, 
``Gibbs states and the set of solutions of random constraint satisfaction problems.''
\emph{Proc. Natl. Acad. Sci.} 104(25):10318--10323 (2007).

\bibitem{lenka-potts} Florent Krz\c{a}kala and Lenka Zdeborov\'a, 
``Potts Glass on Random Graphs.''  \emph{Euro. Phys. Lett.} 81:57005 (2008).

\bibitem{maneva-sinclair} Elitza N. Maneva and Alistair Sinclair, ``On the satisfiability threshold and clustering of solutions of random 3-SAT formulas.''
\emph{Theoretical Computer Science} 407(1--3):359--369 (2008).

\bibitem{mertens-mezard-zecchina} S. Mertens, M. M{\'e}zard, and R. Zecchina, ``Threshold values of Random $k$-SAT from the cavity method.'' \emph{Random Structures and Algorithms} 28 (2006) 340--373.

\bibitem{mezard-zecchina} M. M{\'e}zard, G. Parisi, and R. Zecchina, ``Analytic and Algorithmic Solution of Random Satisfiability Problems.'' \emph{Science} 297 (2002) 812.

\bibitem{mitz-upf} M. Mitzenmacher and E.Upfal, 2005. ``Probability and 
Computing: Randomized Algorithms and Probabilistic Analysis'' Cambridge University Press, New York, NY, USA.

\bibitem{recon-cluster} Andrea Montanari, Ricardo Restrepo, and Prasad Tetali,
``Reconstruction and Clustering in Random Constraint Satisfaction Problems.''
\emph{SIAM J. Discrete Math.} 25(2):771--808 (2011).

\bibitem{mulet} R. Mulet, A. Pagnani, M. Weigt, and R. Zecchina,  
``Coloring random graphs.''  \emph{Phys. Rev. Lett.} 89 (2002) 268701. 

\bibitem{sly} Allan Sly, ``Reconstruction of Random Colourings.'' 
\emph{Communications in Mathematical Physics} 288(3):943--961 (2009).

\bibitem{lenka-boettcher} Lenka Zdeborov\'a and Stefan Boettcher, 
``Conjecture on the maximum cut and bisection width in random regular graphs.'' 
\emph{J. Stat. Mech.} (2010) P02020.

\bibitem{lenka-col} Lenka Zdeborov\'a and Florent Krz\c{a}kala, 
``Phase transitions in the coloring of random graphs.''  
\emph{Phys. Rev. E} 76:031131 (2007)




\end{thebibliography}
\end{document}